\newtheorem{theorem}{Theorem}[section]
\newtheorem{lemma}[theorem]{Lemma}
\newtheorem{conj}[theorem]{Conjecture}
\theoremstyle{definition}
\DeclareMathOperator{\dist}{dist}
\DeclareMathOperator{\length}{length}
\DeclareMathOperator{\id}{Id}
\DeclareMathOperator{\diam}{diam}
\DeclareMathOperator{\Vol}{Vol}
\DeclareMathOperator{\Ric}{Ric}
\newcommand{\TODO}[1]{}
\renewcommand{\tilde}[1]{{\widetilde{#1}}}
\renewcommand{\th}{\theta}
\newcommand{\p}{\partial}
\renewcommand{\epsilon}{\varepsilon}
\title{Scalar curvature and volume entropy of hyperbolic 3-manifolds}
\author{Demetre Kazaras, Antoine Song, Kai Xu}
\date{}
\begin{document}

\maketitle

\begin{abstract} 
We show that any closed hyperbolic $3$-manifold $M$ admits a Riemannian metric with scalar curvature at least $-6$, but with volume entropy strictly larger than $2$. In particular, this construction gives counterexamples to a conjecture of I. Agol, P. Storm and W. Thurston.
\end{abstract}

A classical topic in Riemannian geometry is to understand how curvature and volume % and the volume entropy 
interact. For instance, an elementary application of the Gauss-Bonnet formula implies that if $(S,g_0)$ is a closed hyperbolic surface, then any Riemannian metric $g$ on $S$ with scalar curvature bound $R_g\geq -2$ has area at least that of $g_0$. A deep corollary of G. Perelman's resolution of the Geometrization conjecture of W. Thurston \cite{Anderson06,KL08} is that the corresponding bound still holds in dimension $3$: if $(M,g_0)$ is a closed hyperbolic $3$-manifold, for any metric $g$ on $M$, 
\begin{equation}\label{perelman}
\text{if}\quad R_g\geq -6,\quad \text{then}\quad\Vol(M,g) \geq \Vol(M,g_0).
\end{equation}
An earlier conjecture of R. Schoen \cite{Schoen06} predicts that this should hold in any dimension. For related 
local results, see \cite{BCG91,Yuan23}.

\TODO{added heuristic definition of scalar curvature, for comparison}
While scalar curvature provides information on the volumes of small geodesic balls,
the volume entropy of a closed Riemannian manifold $(M,g)$ provides a measure of the volumes of large balls in its universal cover,
%to those of a hyperbolic manifold, 
given by the limit \TODO{added definition}
\begin{equation}
    h(g)=\lim_{r\to\infty}\frac{\log\left(\Vol(B_{\tilde{g}}(p_0,r),\tilde{g})\right)}{r}
\end{equation}
where $\Vol(B_{\tilde{g}}(p_0,r),\tilde{g})$ is the volume of a ball of radius $r$ in the universal cover $(\tilde{M},\tilde{g})$.
Bishop-Gromov's inequality readily shows that a manifold with Ricci curvature bounded from below $\Ric_g\geq -(n-1)g$ satisfies the volume entropy bound $h(g)\leq n-1$. On the other hand, a well-known theorem of G. Besson, G. Courtois and S. Gallot \cite{BCG95} states that if $(M,g_0)$ is a hyperbolic manifold of dimension $n\geq 3$ and $g$ is any metric on $M$, we have
\begin{equation}\label{BCG}
\text{if} \quad h(g)\leq n-1,\quad \text{then}\quad  \Vol(M,g) \geq \Vol(M,g_0). 
\end{equation}
%Given that \eqref{perelman} and \eqref{BCG} lead to the same volume lower bound, 
Comparing statements \eqref{perelman} and \eqref{BCG}, one is led to examine the relation between scalar curvature and volume entropy.
%This circle of ideas suggests to examine the relation between scalar curvature and volume entropy. 
In \cite[Conjecture 12.2]{AST07}, I. Agol, P. Storm and W. Thurston conjectured the following:
\begin{conj} \label{AST}
If $(M,g)$ is a closed Riemannian 3-manifold with $R_g \geq -6$, then $h(g)\leq 2$.
\end{conj}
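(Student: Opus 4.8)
The abstract already announces that Conjecture~\ref{AST} is \emph{false}, so the goal is to disprove it: to equip an arbitrary closed hyperbolic $3$-manifold $M$ with a Riemannian metric $g$ satisfying $R_g\ge-6$ and $h(g)>2$. The governing principle is that the lower scalar curvature bound is extremely permissive --- a region in which one sectional curvature is wildly negative is harmless for it provided another is correspondingly positive --- so the plan is to graft such a region into the hyperbolic geometry in order to make a fixed nontrivial $a\in\pi_1(M)$ have very short translation length in $\tilde{M}=\mathbb{H}^3$ \emph{while keeping the diameter of $M$ bounded}. A free subgroup of $\pi_1(M)$ containing $a$ will then force the $\pi_1(M)$-orbit in $\tilde{M}$ to grow very fast, hence $h(g)$ to be large.

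First I would fix $a,b\in\pi_1(M)$ generating a free group of rank two and realized by disjoint closed geodesics $\gamma_a,\gamma_b\subset M$; this is possible because non-elementary hyperbolic groups contain rank-two free subgroups and closed geodesics are dense in the unit tangent bundle of $M$ (pass to powers of $a$ and $b$ if needed). The analytic core is the next step: replace the hyperbolic metric on a tubular solid torus $\nu(\gamma_a)$, chosen disjoint from $\gamma_b$, by a metric $g_V$ on $V=D^2\times S^1$ with (i) $R_{g_V}\ge-6$; (ii) core circle $\{0\}\times S^1$ of length $\le\epsilon$; (iii) boundary $\partial V$ isometric to the fixed torus $\partial\nu(\gamma_a)$ and mean convex enough to be glued to $(M\setminus\nu(\gamma_a),g_0)$ while keeping $R\ge-6$ --- a convexity one can always produce, if needed, by a Gromov-Lawson bending inside a thin collar on the hyperbolic side; (iv) $\diam(V,g_V)\le D_0$ with $D_0$ independent of $\epsilon$; and (v) $\Vol(V,g_V)$ at least the hyperbolic volume of the removed tube, which keeps the construction compatible with Perelman's bound~\eqref{perelman}.

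Granting this local model, write $g_\epsilon$ for the resulting metric on $M$; by (iv) its diameter is bounded independently of $\epsilon$, so every $\beta\in\pi_1(M)$ displaces a basepoint $x_0$ on the short core loop by a bounded amount, while $a$ displaces $x_0$ by at most $\epsilon$. Volume entropy dominates the exponential growth rate of the $\pi_1(M)$-orbit of $x_0$ in $\tilde{M}$ (compare balls there with translates of a Dirichlet fundamental domain), and distinct reduced words $a^{n_1}b^{m_1}a^{n_2}\cdots$ in $\langle a,b\rangle$ are distinct group elements displacing $x_0$ by at most $\epsilon\sum_i|n_i|+L\sum_i|m_i|$, where $L$ is the bounded displacement of $b$. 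An elementary transfer-matrix count of such words --- in which the $a$-letters have become arbitrarily cheap --- shows that this growth rate, hence $h(g_\epsilon)$, tends to $+\infty$ as $\epsilon\to0$; in particular $h(g_\epsilon)>2$ once $\epsilon$ is small, contradicting Conjecture~\ref{AST}.

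The main obstacle, by a large margin, is constructing the local model of the second step, where requirements (ii) and (iv) pull against each other. Boundedness of $\diam(V,g_V)$ rules out the obvious fix of stretching $\nu(\gamma_a)$ into a long thin neck: indeed, Bishop-Gromov shows that any modification with a uniform lower Ricci bound keeps the volume entropy uniformly bounded, so the sectional curvatures must degenerate as $\epsilon\to0$ --- yet it is exactly the bounded diameter (forcing this degeneration) that also keeps the generator $b$ cheap near the short loop and prevents the gain from washing out. Concretely, $g_V$ must compress the longitude direction of $\partial\nu(\gamma_a)$ from length $\sim\length(\gamma_a)$ down to $\le\epsilon$ across a bounded distance, which forces one sectional curvature of $g_V$ to be of size $\sim\log^2(1/\epsilon)$; keeping $R_{g_V}\ge-6$ then demands an opposing sectional curvature of comparable size, finely balanced. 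A rotationally symmetric doubly warped ansatz does \emph{not} suffice, since the warping function controlling the longitude would have to oscillate through zeros, at which the mixed curvature term drives the scalar curvature below $-6$; a genuinely three-dimensional model is needed, and producing it --- together with the verification of (i), (iii), (v) --- is the heart of the argument, everything else being routine.
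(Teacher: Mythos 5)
Your strategy points in the same direction as the paper's: single out a primitive closed geodesic, surgically shrink it while preserving a lower bound $R\geq-6$, and argue that the short loop accelerates volume growth in the universal cover. The local model $(V,g_V)$ you ask for in step two, with properties (i)--(iv), is exactly what the drawstring metrics of Theorems \ref{t:gluing} and \ref{squeeze} deliver; in fact (iii) needs no Gromov--Lawson bending, because property 2 of Theorem \ref{t:gluing} makes $\bar g$ agree with $g_\mathbb{H}$ on an exact annular collar so the gluing is $C^\infty$ for free, and your (v) is an automatic consequence of Perelman's bound \eqref{perelman} once (i) holds. Where you genuinely diverge from the paper is in extracting the entropy lower bound. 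You count reduced words in a free subgroup $\langle a,b\rangle$ in which the $a$-letter has become cheap and use the fact that for a closed manifold the volume entropy equals the exponential orbit-growth rate of $\pi_1(M)$ on $\tilde M$. Granting the local model, that elementary count is sound and in fact yields more than the paper's Theorem \ref{main}: the number of positive reduced words $a^{n_1}b\cdots a^{n_q}b$ of displacement at most $r$ is of order $\binom{r/2\epsilon}{r/2L}$, whose logarithm is of size $\frac{r}{2L}\log(L/\epsilon)$, so $h(g_\epsilon)\geq c\log(1/\epsilon)\to\infty$, using only that $\diam(M,g_\epsilon)$ and $\Vol(M,g_\epsilon)$ stay bounded (both hold here). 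The paper instead builds auxiliary bi-Lipschitz metrics $g_i\geq\hat g_i$ and applies the quantitative stability theorem of \cite{S} (Theorem \ref{cle}), a much heavier tool that only gives $\liminf h(g_i)>2$. For the narrow purpose of refuting Conjecture \ref{AST} your counting argument is a real simplification and worth noting.

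That said, the proposal is incomplete and contains one claim that is simply wrong. You explicitly decline to construct the local model, calling it ``the heart of the argument, everything else being routine,'' so as written this is a reduction, not a proof. More seriously, your stated reason for why the obvious construction should fail is incorrect: you assert that a rotationally symmetric doubly warped ansatz cannot work because the longitude warping ``would have to oscillate through zeros,'' and conclude ``a genuinely three-dimensional model is needed.'' But the paper's local model is precisely a rotationally symmetric doubly warped metric on $D^2\times\RR$, written in the $(u,f)$-form
\begin{equation*}
\bar g=e^{-2u(r)}\bigl(dr^2+f(r)^2\,d\theta^2\bigr)+e^{2u(r)}\,dt^2,
\end{equation*}
whose longitude factor $e^{2u}$ is made very small near the core and returns monotonically to the hyperbolic profile -- it never vanishes and never oscillates. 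By Proposition \ref{prop:curvatures} its scalar curvature is $R_{\bar g}=2e^{2u}\bigl(-f''/f-(u')^2\bigr)$: the loss $-(u')^2$ from compressing the longitude is paid for by concavity of $f$, and the prefactor $e^{2u}$ is smallest exactly where $u'$ is largest. The awkward mixed term $-2a'b'/(ab)$ you worried about arises only in the raw parametrization $ds^2+a(s)^2d\theta^2+b(s)^2dt^2$; the $(u,f)$-normalization (equivalently, the constraint $ab=f$ together with the conformal radial coordinate) eliminates it. So the warped-product route is not a dead end -- it is the route.
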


%In particular, this combined with \eqref{BCG} gives another proof of \eqref{perelman}.

In \cite{AST07}, the authors established certain volume bounds for $3$-manifolds with minimal boundary and scalar curvature lower bounds. Their method of proof heavily relied on Perelman's proof of \eqref{perelman}, and Conjecture \ref{AST} was actually partially motivated by the desire to find a proof which did not rely on Ricci flow with surgery. In particular, Besson-Courtois-Gallot's volume entropy inequality \eqref{BCG} and Conjecture \ref{AST} would give a different proof of \eqref{perelman}.

\vspace{12pt}

The main goal of this paper is to construct counterexamples to Conjecture \ref{AST}, and investigate how flexible the volume entropy is with respect to scalar curvature.

\begin{theorem}%[\comment{New}]
\label{main}
    Let $(M,g_0)$ be a closed hyperbolic manifold of dimension $3$. Then $M$ admits a Riemannian metric $g$ with $R_g\geq -6$, but with volume entropy $h(g)$ strictly larger than $2$. In fact, $g$ can be chosen so that one of the following two conditions holds:
    
    (A) $h(g)$ is arbitrarily large,
    
    (B) $g$ is arbitrarily close to $g_0$ in the $C^0$ topology.
\end{theorem}

{ This theorem provides two different types of counterexamples, which actually correspond to the two extreme cases of a family of counterexamples. We note that (A) and (B) in the theorem cannot be satisfied at the same time, since $C^0$ closeness of the metrics implies closeness of the respective volume entropies by an elementary comparison.}

%\comment{In the sections, do we write "Proof of Theorem 0.2(A)" and "Proof of Theorem 0.2(B)", or do we re-state the theorem like what it is now? Which one do you prefer? A: I am okay with what's written now.}

%\begin{theorem}[\comment{Old}]\label{main_old}
%Let $(M,g_0)$ be a closed hyperbolic manifold of dimension $3$. Then $M$ admits a Riemannian metric $g$ with $R_g\geq -6$, but with volume entropy strictly larger than $2$. \end{theorem}

%Another hint was the behavior of stable constant mean curvature surfaces in metrics with scalar curvature lower bounds: since the volume entropy measures the growth rate of geodesic balls in the universal cover, under the assumptions of Conjecture \ref{AST}, one could have hoped to argue by contradiction, assume that $h(g)>2$ and produce a stable constant mean curvature surface somewhere in $(\tilde{M},\tilde g)$ with mean curvature larger than $2$, which would have contradicted the assumption $R_g\geq -6$.% \comment{probably clarify that since there is a counterexample, this approach must fail?}
%For other related papers, see \cite{Miao-Tam,HuJiShi,Yuan,}

\vspace{12pt}

%\comment{This part needs to be updated: [[ begin---}

\textbf{Overview of proof:} 
The proof of Theorem \ref{main} can be summarized as follows. We first deform the given hyperbolic $3$-manifold $(M,g_0)$ using a drawstring 
construction adapted from \cite{KX}. Starting with a closed simple non-contractible geodesic $\gamma$, there exists a metric $g$ such that $R_g\geq-6 -o(1)$, and $g=g_0$ outside an arbitrarily thin neighborhood of $\gamma$, and the $g$-length of $\gamma$ is equal to a chosen value in the range $(0,\length_{g_0}(\gamma))$. This construction is the content of Theorem \ref{thm:creating_ds} in Section \ref{drawst}. Heuristically, the drawstring has the effect that one can reach more fundamental domains in the universal cover within the same distance, thereby increasing the volume of geodesic balls as depicted in Figure \ref{fig:pulledstringball}. To achieve condition (A) of the theorem, we choose $\length_{g}(\gamma)$ to be small. Then we show that $h(g)$ is large by applying a systolic bound for the volume entropy which was established under various forms by G. Besson, G. Courtois, S. Gallot, A. Sambusetti \cite{BCGS17}, F. Cerocchi \cite{Cerocchi14}, F. Balacheff, L. Merlin \cite{BM23}. To construct examples satisfying (B) of the theorem, we choose $\length_{g}(\gamma)$ to be close to $\length_{g_0}(\gamma)$, then from the construction, $g$ is $C^0$-close to $g_0$. Then we show that $h(g)$ is strictly larger than $2$ by applying a recent volume entropy comparison theorem \cite{S}.

\begin{figure}[h]
\begin{picture}(0,0)
\end{picture}
\begin{center}
\includegraphics[totalheight=5cm]{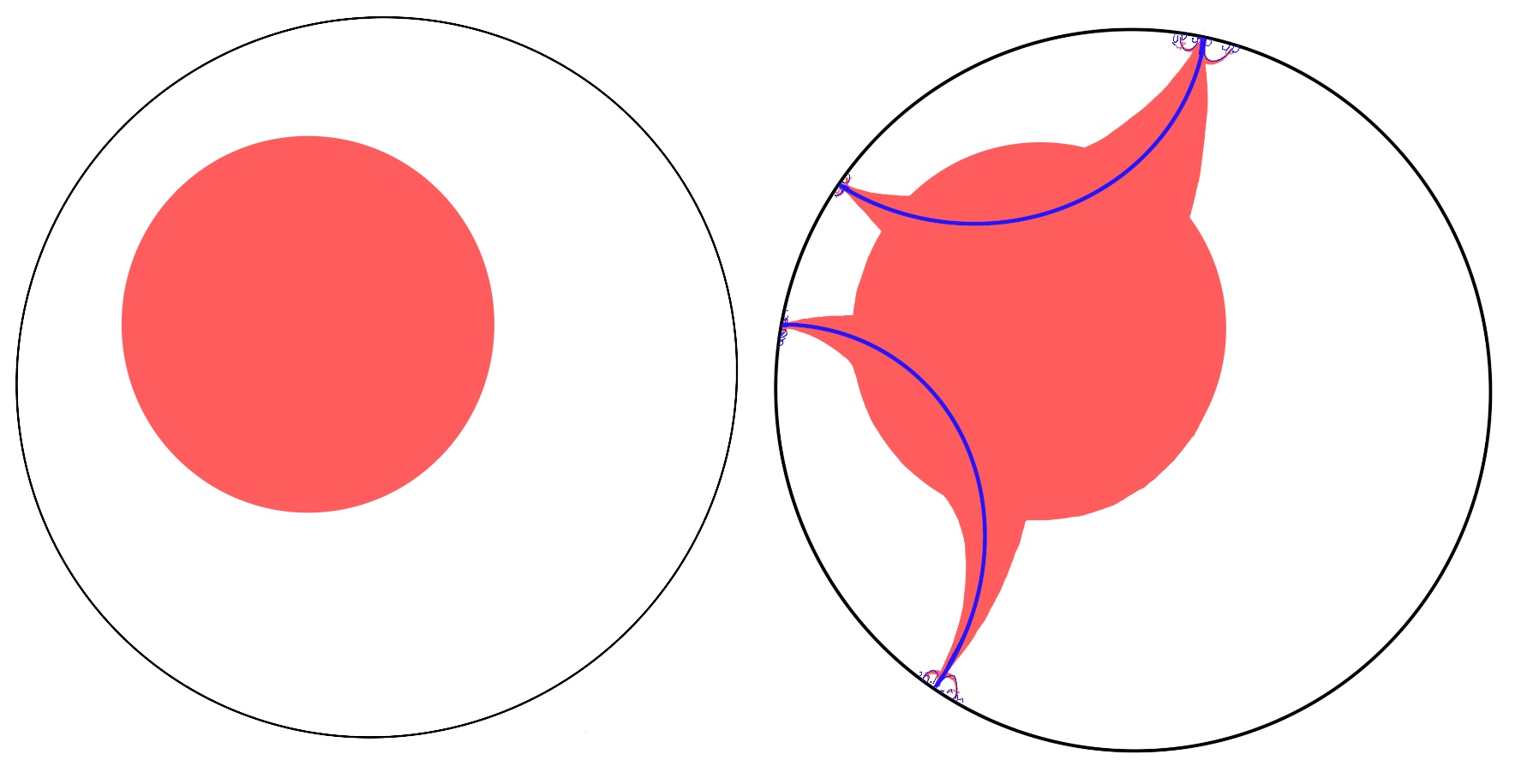}
\end{center}
\setlength{\abovecaptionskip}{-10pt}
\captionsetup{width=.85\linewidth}
\caption{On the left, a geodesic ball is depicted in the Poincar{\'e} disc model of hyperbolic space. On the right, the associated ball in the universal cover of the examples $(M,g)$ constructed in Theorem \ref{main} is shown. where the lifts of $(M,g)$'s drawstring are drawn in blue. }
\label{fig:pulledstringball}
\end{figure}

\vspace{1em}

\textbf{Related work and questions:} 
Beyond the classical results already mentioned, some recent findings suggested a positive answer for Conjecture \ref{AST}.
In an unpublished note of I. Agol \cite{agol}, it was shown that if the two lowest eigenvalues of $\mathrm{Ric}_g$ sum to at least $-4$, and if there is no cut locus in the universal cover, then the conclusion of the conjecture is true. Note that this curvature condition lies between $\Ric_g\geq-2g$ and $R_g\geq -6$.

By H. Davaux \cite{Davaux03}, if $M$ is a closed hyperbolic $3$-manifold and if $(M,g)$ satisfies $R_g\geq -6$, then \TODO{KX: checked the reference. OK!}
\begin{equation}\label{MW}
\lambda_1(\tilde{M},\tilde{g})\leq 1,
\end{equation}
where $\lambda_1(\tilde{M},\tilde g)$ is the first nonzero eigenvalue of the Laplacian on the universal cover $(\tilde{M},\tilde g)$ of $(M,g)$. More recently, that result was generalized to many complete $3$-manifolds in the work of O. Munteanu and J. Wang \cite{MW23,MW22}. 
On the other hand, it is well-known (by considering compactly supported $C^2$ approximations to the functions $e^{-s\dist_p(.)}$ for $s> h(g)/2$) that
\begin{equation}\label{eq:lambdaent}
    \lambda_1(\tilde{M},\tilde g)\leq \frac{1}{4}h(g)^2.
\end{equation}
Since Conjecture \ref{AST} and \eqref{eq:lambdaent} imply \eqref{MW}, one can consider \eqref{MW} as a positive solution to a weaker version of Conjecture \ref{AST}.

Finally, in \cite{CMN22}, D. Calegari, F.C. Marques and A. Neves introduced a notion of minimal surface entropy $E(g)$, based on a count of minimal surfaces instead of geodesics as for the volume entropy. B. Lowe and A. Neves \cite{LN21} proved that $R_g\geq-6$ implies%under the assumption of Conjecture \ref{AST}, one has 
\begin{equation}
    E(g)\leq E(g_0)=2.
\end{equation}
One may consider this statement as the minimal surface version of Conjecture \ref{AST}. The proof of \cite{LN21} relies on the Ricci flow.

%\comment{(Are there changes of the contents between ``Related works and questions'' and here?) NO}

We do not know  whether Conjecture \ref{AST} holds 
{for metrics $C^{\infty}$-close to hyperbolic metrics,}
or for metrics satisfying some additional curvature condition. For instance, it is unknown \footnote{This question was suggested by Ben Lowe.} if $h(g)\leq2$ for negatively curved metrics satisfying $R_g\geq-6$. One may also ask if there is a threshold $A>1$ so that $R_g\geq-6$ and $\mathrm{Ric}_g\geq -2Ag$ implies $h(g)\leq 2$. These are in the same spirit as H. Bray's Football Theorem \cite[Theorem 18]{Braythesis}, where an additional Ricci curvature bound leads to a sharp volume comparison theorem for scalar curvature. Also, these curvature conditions prohibit the formation of drawstrings which are essential to the examples in Theorem \ref{main}.

% \comment{(are there examples with large entropy and $C^0$ close to $g_0$?) NO because the entropy varies reasonably well under C0 topology, recall the monotonicity type lemma that we used in earlier version.}

Both \cite{KX} and \cite{S} were motivated by questions related to stability. First, in \cite{KX}, the first and third named authors found a drawstring construction which was used to provide counterexamples to conjectures \cite{Sormani21} on the geometric stability of Geroch conjecture. The drawstring example in \cite{KX} is roughly described as follows. Given a simple closed geodesic $\sigma$ in a $3$-torus, one can modify the flat metric in an arbitrarily small neighborhood of $\sigma$, so that (1) the length of $\sigma$ computed by the new metric is arbitrarily small, making the manifold Gromov-Hausdorff close to a pulled string metric space, (2) the scalar curvature is decreased only by a small amount. This construction is partly inspired by (but different from) the higher dimensional examples of collapsing tori due to M.C. Lee, A. Naber and R. Neumayer; see \cite{KX} for details. In Section \ref{drawst}, we will see that the drawstring construction can be performed in the hyperbolic setting.
Next, in \cite{S}, the second named author showed that the volume entropy inequality of Besson-Courtois-Gallot is stable for the measured Gromov-Hausdorff topology, after possibly removing  subsets with small boundary area.
%in the following sense: given a closed hyperbolic manifold $(M,g_0)$ and a metric $g$, if $h(g)$ and $\Vol(M,g)$ are respectively close to $\Vol(M,g_0)$ and $h(g_0)$, then $(M\setminus Z,g)$ is close to $(M,g_0)$ as metric measure spaces, for some smooth subset $Z$ with small boundary area. 
A proof ingredient, which we will use later, is a sharp volume entropy comparison for Riemannian metrics almost metrically dominated by the hyperbolic metric. This comparison result was shown using the equidistribution properties of geodesic spheres in hyperbolic manifolds.

As an additional consequence of our construction, we will see that given a closed hyperbolic 3-manifold $(M,g_0)$, there are metrics $g$ with $R_g\geq -6$ and volumes arbitrarily close to $g_0$, but which are uniformly far from $g_0$ with respect to the Gromov-Hausdorff topology, the Gromov-Prokhorov topology or the intrinsic flat topology \cite{SW11}.
In particular, Perelman's volume bound \eqref{perelman} is not stable for those standard topologies. It is an open problem to find a more subtle stability statement for \eqref{perelman}. One possibility is to first find negligible subsets $Z\subset M$, then estimate the distance between $(M\setminus Z,g)$ and $(M,g_0)$ in the sense of metric measure spaces \cite{S,DS23} or perhaps using the $d_p$ topology defined by Lee-Naber-Neumayer \cite{LNN23}.

\subsection*{Acknowledgments}
\TODO{removed professor?}
The authors would like to thank Ian Agol, Conghan Dong, Christina Sormani for enlightening discussions, Xiaodong Wang for pointing out the reference \cite{Davaux03}, and Andr\'{e} Neves and Ben Lowe for their interests in this topic. 

A.S. was partially supported by NSF grant DMS-2104254. This research was conducted during the period A.S. served as a Clay Research Fellow.

%\vspace{24pt}
%\comment{Above: the old introduction}

%-----------------------------------------------------------------------

\section{Drawstrings in hyperbolic 3-manifolds}
\label{drawst}

\begin{theorem}\label{thm:creating_ds}
    Let $(M,g_0)$ be a closed hyperbolic 3-manifold, and $\gamma$ be a simple closed geodesic. For any $a<0$, $\epsilon>0$, $r_0>0$, there exists $r_1<r_0$ and a metric $g$ such that:

    (i) $g=g_0$ outside $N_{g_0}(\gamma,r_1)$, and $\min\big\{(1-\epsilon)^2,e^{2a}\big\}g_0\leq g\leq e^{-2a}g_0$,

    (ii) $g|_\gamma=e^{2a}g_0|_\gamma$,

    (iii) $R_g\geq-6-\epsilon$,

    (iv) if $\sigma:[0,r_1]\to M$ is a unit speed $g_0$-geodesic with $\sigma(0)\in\gamma$ and $\sigma'(0)\perp\gamma$, then $\length_g(\sigma)\leq 3r_1$.
\end{theorem}

\noindent{\bf{Proof overview:}} The construction is a modification of the one carried out in \cite[Section 4]{KX}. It is organized as follows. We work in a cylindrical coordinate system near $\gamma$ and describe the metric coefficients of $g$ in terms of two radial functions. These functions are determined by four positive constants $r_1,r_2,c_1,c_2$, chosen in terms of $a,\varepsilon,r_0$, and the geometry of $\gamma$. After a preliminary estimate on $R_g$ and fixing appropriate values for these constants, properties (i)--(iv) are established.

\begin{proof}
    %The construction is similar to \cite[Section 4]{KX}. 
    Fix two cutoff functions $\zeta,\eta\in C^\infty([0,\infty))$ satisfying
    \begin{align}\label{eq-cutoff:etazeta}
        \begin{split}
            & \zeta|_{[0,\frac12]}=0,\quad \zeta|_{[1,\infty)}=1,\quad    0\leq\zeta'\leq4,\quad |\zeta''|\leq16,\\
            & \eta|_{[0,\frac12]}=1,\quad \eta|_{[1,\infty)}=0,\quad 0\geq\eta'\geq-4, \quad|\eta''|\leq16.
        \end{split}
    \end{align}
    
    Choose $r_1<\min\big\{r_0,e^{-100}\big\}$ smaller than the normal injectivity radius of $\gamma$. In cylindrical coordinates near $\gamma$, the hyperbolic metric is
    \begin{equation}\label{eq:hyp_metric}
        g_0=dr^2+\sinh^2 rd\th^2+\cosh^2 rdt^2.
    \end{equation}
    For positive constants $r_2<r_1/64,c_1,c_2$ to be determined, we set the functions
    \begin{align}
        \psi(r)&=\int_0^r\Big[\zeta\big(\frac s{r_2}\big)\frac{1}{s\log^2(1/s)}+\big(1-\zeta\big(\frac s{r_2}\big)\big)\frac s{r_2}\Big]\,ds, \label{eq-cutoff:def_psi}\\
        h(r)&=1-c_1\eta\big(\frac r{r_1}\big)\psi(r), \label{eq-cutoff:def_h}\\
        u(r) &= -c_2\int_r^\infty\zeta\big(\frac s{4r_2}\big)\eta\big(\frac{4s}{r_1}\big)\frac{ds}{s\log(1/s)}.\label{eq-cutoff:def_u}
    \end{align}
    \vspace{-18pt}
    \begin{figure}[h!]
        %\centering
        \setlength{\abovecaptionskip}{-14pt}
        \setlength{\belowcaptionskip}{-6pt}
        \hspace{-153pt}\includegraphics[scale=1.15]{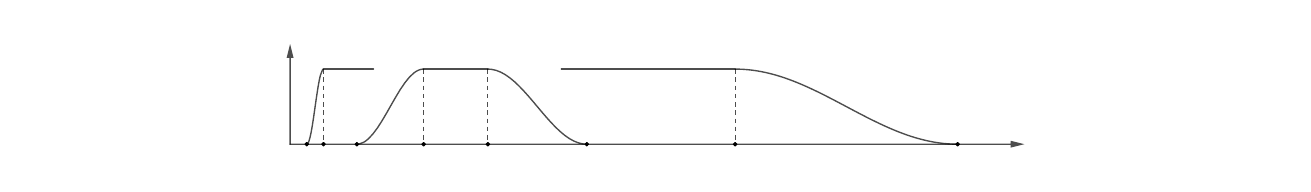}
        %\vspace{-6pt}
        \begin{picture}(0,0)
            \put(28,90){$\zeta\big(\frac r{r_2}\big)$}
            \put(70,90){$\zeta\big(\frac{r}{4r_2}\big)\eta\big(\frac{4r}{r_1}\big)$}
            \put(190,90){$\eta\big(\frac r{r_1}\big)$}
            \put(2,28){$0$}
            \put(12,28){$\frac{r_2}2$}
            \put(24,28){$r_2$}
            \put(38,28){$2r_2$}
            \put(75,28){$4r_2$}
            \put(112,28){$\frac{r_1}8$}
            \put(166,28){$\frac{r_1}4$}
            \put(249,28){$\frac{r_1}2$}
            \put(374,28){$r_1$}
            \put(410,28){$r$}
        \end{picture}
        \caption{The cutoff functions that appear when defining $h,u$}
        \label{fig:enter-label}
        \TODO{before publishing: the graph should be on the same page as the cutoff functions.}
    \end{figure}

    \noindent With these functions, consider the drawstring metric
    \begin{equation}\label{eq:ds_metric}
        g=e^{-2u}dr^2+e^{-2u}h^2\sinh^2r d\th^2+e^{2u}\cosh^2rdt^2.
    \end{equation}
    Note that $g=g_0$ for $r>r_1$. Since $u$ is constant and $h=1-\frac{c_1}{2r_2}r^2$ near $r=0$, $g$ is a constant multiple of a doubly warped product
    \[dr^2+\big(1-\frac{c_1}{2r_2}r^2\big)\sinh^2rd\theta^2+e^{4u(0)}\cosh^2rdt^2\]
    in a neighborhood of $\gamma$. The coefficients of $d\theta^2$ and $dt^2$ satisfy the requisite conditions in \cite[Proposition 1.4.7]{Petersen}, hence $g$ is smooth near $\gamma$. A routine computation (postponed to the end of this section) shows
    \begin{equation}\label{eq:scal_formula_1}
        R_g=-6e^{2u}+2e^{2u}\Big[-\frac{h_{rr}}h-2\frac{h_r}{h}\coth r-u_r^2\Big]-2e^{2u}\tanh r\Big[\frac{h_r}h+2u_r\Big],
    \end{equation}
    where the subscripts $_r$ and $_{rr}$ denote the first and second derivatives with respect to $r$. Since $\big|\!\coth r-r^{-1}|\leq r$ and $\tanh r\leq r$ for $r\leq r_1$, we have
    \begin{equation}\label{eq:scal_formula}
        R_g\geq-6e^{2u}+2e^{2u}\Big[-\frac{h_{rr}}h-2\frac{h_r}{rh}-u_r^2\Big]-6e^{2u}r\Big[\frac{|h_r|}h+|u_r|\Big].
    \end{equation}

    We first let $c_1<\min\{r_1,\epsilon\}$. Note that for all $0\leq r\leq r_1$,
    \[\psi(r)\leq\int_0^r\frac{ds}{s\log^2(1/s)}+\int_0^{\min\{r,r_2\}}\frac s{r_2}\,ds\leq\frac1{\log(1/r_1)}+\frac{r_2}2<\frac12,\]
    so by \eqref{eq-cutoff:def_h}, we have the pointwise estimate $\max\{1-\epsilon,1/2\}\leq h\leq 1$ for all $r\in[0,r_1]$, which we will make frequent use of. Next we claim that by decreasing $c_1$, we may achieve $R_g\geq-6-\epsilon$ for all $r\in[r_1/4,r_1]$. To prove this, we note that $u(r)\equiv0$ in $[r_1/4,r_1]$, so \eqref{eq:scal_formula} is reduced to
    \[\begin{aligned}
        R_g &\geq -6+2\Big[-\frac{h_{rr}}h-\frac{2h_r}{rh}\Big]-6r\cdot\frac{|h_r|}h\geq -6-4|h_{rr}|-20r^{-1}|h_r|,
    \end{aligned}\]
    where we have used $h\geq1/2$. Moreover, we have $\zeta(r/r_2)\equiv1$ for $r\in[r_1/4,r_1]$, hence on this domain the quantities $h_r$ and $h_{rr}$ only involve $c_1$ and $r_1$ (but not $c_2,r_2$). Since $R_g\equiv-6$ when $c_1=0$, our claim follows by a continuity argument. We fix such a choice of $c_1$. The lower bound of $R_g$ on the remaining interval $[0,r_1/4]$ occurs below.

    Next, we claim that there exist choices $r_2<r_1/64$ and $c_2<c_1$, such that $u(0)=a$. Clearly, to achieve $u(0)=a$, one needs to set
    \[c_2=-a\Big[\int_0^\infty\eta\big(\frac{4s}{r_1}\big)\zeta\big(\frac s{4r_2}\big)\frac{ds}{s\log(1/s)}\Big]^{-1}.\]
    Our claim follows by observing that the right hand side converges to $0$ when $r_2\to0$ (since $\int_x^{r_1/8} ds/[s\log(1/s)]= \log\log x-\log\log(r_1/8)$). Let us fix these choices of $r_2,c_2$.

    Note that $u(0)=a$ implies property (ii) of the theorem. Also, we have $a\leq u\leq 0$ for all $r\leq r_1$. Thus, the second statement of property (i) follows by combining \eqref{eq:ds_metric} and $1-\epsilon\leq h\leq 1$. 

    Now we are ready to control $R_g$ in the range $r\leq r_1/4$. In this case note that $\eta(r/r_1)\equiv1$. Denote $w=\log(1/r)$. We may directly compute (similar to \cite[(126)--(130)]{KX})
    \begin{equation}\label{eq:scal_est_1}
        \begin{aligned}
            -\frac{h_{rr}}{h}-\frac{2h_r}{rh}-u_r^2 &= \frac{c_1}{1-c_1\psi}\Big[\zeta'\big(\frac r{r_2}\big)\big(\frac1{r_2rw^2}-\frac r{r_2^2}\big)+\frac{\zeta(r/r_2)}{r^2w^2} \\
            &\qquad +2\frac{\zeta(r/r_2)}{r^2w^3}+3\frac{1-\zeta(r/r_2)}{r_2}\Big]-\frac{c_2^2}{r^2w^2}\zeta^2\big(\frac r{4r_2}\big)\eta^2\big(\frac{4r}{r_1}\big),
        \end{aligned}
    \end{equation}
    which is nonnegative in $[0,r_1]$; the inequalities $\zeta'\geq0$ and $\frac1{rw^2}\geq1\geq\frac r{r_2}$ show the first term in the bracket is nonnegative, and the facts $c_2^2\leq c_1$, $\eta\leq1$, and $\zeta(r/4r_2)\leq\zeta(r/r_2)\leq1$ imply that the second term in the bracket dominates the final quantity.
    %\st{Since $c_2\leq\min\big\{\sqrt{c_1},\epsilon\big\}$ and $\frac1{rw^2}>1$ for all $r\leq r_1$, the computation in [(126)--(133)][KX] gives $-\frac{h_{rr}}h-2\frac{h_r}{rh}-u_r^2\geq0.$}
    %\begin{equation}\label{eq:scal_est_1}
     %   -\frac{h_{rr}}h-2\frac{h_r}{rh}-u_r^2\geq0.
    %\end{equation}
    Also, we have $h\geq\frac12$, $|h_r|\leq c_1\big(\frac1{rw^2}+(1-\zeta(r/r_2))\frac r{r_2}\big)$ and $|u_r|<\frac{c_2}{rw}$, so
    \begin{equation}\label{eq:scal_est_2}
        6e^{2u}r\Big[\frac{|h_r|}h+|u_r|\Big]\leq 6r\cdot\Big[2c_1\frac1{rw^2}+2c_1+\frac{2c_2}{rw}\Big]\leq\frac{36\epsilon}{w}\leq\epsilon.
    \end{equation}
    Combined with \eqref{eq:scal_formula} and \eqref{eq:scal_est_1}, we obtain $R_g\geq-6-\epsilon$ for all $r\leq r_1/4$. Taking into account our choice of $c_1$, this establishes property (iii). 
    
    Finally, to prove property (iv), it is sufficient to show that
    \[\int_0^{r_1}e^{-u}\,dr\leq 3r_1.\]
    To see that this holds, note that $u(r)\geq -c_2\log\log(1/r)\geq-r_1\log\log(1/r)$, thus
    \begin{align}
        \int_0^{r_1}e^{-u}\,dr &\leq \int_0^{r_1}\big(\log(1/r)\big)^{r_1}\,dr \nonumber\\
        &= r_1\big(\log(1/r_1)\big)^{r_1}+r_1\int_0^{r_1}\big(\log(1/r)\big)^{r_1-1}\,dr. \label{eq:lengthint}
    \end{align}
    \TODO{KX: Christina pointed out to me that there is a calculation error here. And she suggests us to use $(\log(1/x))^{r_1}$ instead of $\log(1/x)^{r_1}$.}
    Using $(\log(1/x))^x\leq 2$ and $\log(1/r)>1$ for all $r\leq r_1$, we find that the right hand side of \eqref{eq:lengthint} is bounded by $3r_1$.
\end{proof}

To conclude, we provide the scalar curvature computations asserted in \eqref{eq:scal_formula_1}.

\begin{proof}[Proof of \eqref{eq:scal_formula_1}]
    Consider the surfaces of constant $r$-coordinate $\Sigma_r=\big\{d_{g_0}(\cdot,\gamma)=r\big\}$. Let $g_r,A,H$ be the induced metric, second fundamental form, and mean curvature (i.e the sum of the principal curvatures) with respect to $g$ of $\Sigma_r$, and denote the unit normal $\nu=e^u\p_r$. Since $\Sigma_r$ is flat, the Gauss equation and the variational formula for mean curvature yield
    \[0=R_g-2\Ric_g(\nu,\nu)+H^2-|A|^2_{g_r},\qquad e^uH_r=-|A|^2_{g_r}-\Ric_g(\nu,\nu).\]
    Cancelling the Ricci curvature, we obtain
    \begin{equation}\label{eq:R1}
        R_g=-\big(H^2+|A|^2+2e^uH_r\big).
    \end{equation}
    From the general formula $A=\frac12(\mathcal{L}_\nu g_r)|_{\Sigma_r}$, we may calculate
    \[\begin{aligned}
        H &= e^u\big(h^{-1}h_r+\tanh r+\coth r\big), \\
        |A|^2_{g_r} &= e^{2u}\big(\!-u_r+h^{-1}h_r+\coth r\big)^2+e^{2u}\big(u_r+\tanh r\big)^2.
    \end{aligned}\]
    Inserting into \eqref{eq:R1} and using the hyperbolic trigonometry identities, this implies
    \[\begin{aligned}
       R_g &= -e^{2u}\Big[\frac{h_r^2}{h^2}+(\tanh r+\coth r)^2+2\frac{h_r}h(\tanh r+\coth r) \\
       &\qquad\qquad +u_r^2+\frac{h_r^2}{h^2}+\coth^2r-2u_r\frac{h_r}h-2u_r\coth r+2\frac{h_r}h\coth r \\
       &\qquad\qquad +u_r^2+\tanh^2r+2u_r\tanh r\\
       &\qquad\qquad +2u_r\Big(\frac{h_r}h+\tanh r+\coth r\Big)+2\frac{h_{rr}}h-2\frac{h_r^2}{h^2}+\frac2{\cosh^2 r}-\frac2{\sinh^2 r}\Big] \\
       &= -e^{2u}\Big[2\frac{h_{rr}}h+4\frac{h_r}h\coth r+2u_r^2+2\frac{h_r}h\tanh r+4u_r\tanh r+6\Big]. \qedhere
   \end{aligned}\]
\end{proof}

   % \[\begin{aligned}
   %     0&=2K=R-2\Ric(\nu,\nu)+H^2-|A|^2, \\
   %     e^u\frac{\p H}{\p r}&=-|A|^2-\Ric(\nu,\nu). \\
   %     R &= -(H^2+|A|^2+2e^uH_r), \\
   %     A_i^j &= e^u\text{diag}(-u_r+h^{-1}h_r+\coth r, u_r+\tanh r), \\
   %     H &= e^u(h^{-1}h_r+\tanh r+\coth r) \\
   %     |A|^2 &= e^{2u}(-u_r+h^{-1}h_r+\coth r)^2+e^{2u}(u_r+\tanh r)^2
   % \end{aligned}\]

\section{Volume entropy comparison}
\label{comparer}
We are ready to make use of the drawstring construction from the previous section to prove the main theorem, Theorem \ref{main}. We will construct two types of counterexamples to the Agol-Storm-Thurston problem. The first type of metrics have very large volume entropy, and the proof is based on a general systolic lower bound for the volume entropy \cite[Corollary 3]{BM23}. The second type of metrics are $C^0$-close to the hyperbolic metric, and the proof relies on an asymptotic volume entropy comparison \cite[Theorem 3.5]{S}. 

\subsection{Examples with arbitrarily large volume entropy}

\begin{theorem}[identical to Theorem \ref{main}(A)]\label{main1}
    Let $(M,g_0)$ be a closed hyperbolic 3-manifold. Then $M$ admits a Riemannian metric $g$ with $R_g\geq -6$, and with arbitrarily large volume entropy $h(g)$.
\end{theorem}

A first tool is the following ``collar lemma'', which yields a bound for the volume entropy, see \cite[Theorem 1.11]{BCGS17} \cite[Theorem 1.2]{Cerocchi14} \cite[Corollary 2]{BM23}. We will use the statement of Balacheff-Merlin \cite{BM23}, which is a direct consequence of Lim's work \cite{Lim08}:
\begin{theorem}\label{collar}
    Let $(N,g_N)$ be a closed Riemannian manifold with positive volume entropy $h(g_N)>0$. Suppose that $c_1$, $c_2$ are two loops based at a point $x$ in $N$, which generate a free group of rank $2$ inside the fundamental group of $N$. %\comment{(which version is better?)} 
    Denote $l_i:=\length(c_i)$. Then we have
    \[\frac1{1+e^{h(g_N)l_1}}+\frac1{1+e^{h(g_N)l_2}}\leq\frac12.\]
    
    %\comment{(original:)} Then for $\length(c_1)$ sufficiently close to $0$, $$\length(c_2)\geq \frac{1}{h(g_N)} \log(\frac{4}{h(g_N)\length(c_1)}) +o(1).$$
\end{theorem}
%{\color{red} [does the little $o(1)$ refer to $length(c_1)\to0$?]} Yes

%\comment{[maybe include some statement like: "in particular, if length c1 is small, then either length c2 or entropy is large."]}
In particular, this collar lemma tells us that if $l_1$ is small, then either $l_2$ or $h(g_N)$ is large. Next, we will also need the following standard property of fundamental groups of hyperbolic manifolds, which is a consequence of the ping-pong lemma:

%\comment{Note from BM23: let $\Gamma$ be the free group generated by $c_1,c_2$. Then we have
%\[\frac1{1+e^{\delta l_1}}+\frac1{1+e^{\delta l_2}}\leq\frac12,\]
%where $l_i=\length(\gamma_i)$ and $\delta$ is the critical exponent of $\Gamma$. Since $\Gamma$ is a subgroup of $\pi_1(M)$, we have $\delta(\Gamma)\leq h(g_N)$. So we have
%\[\frac1{1+e^{h(g_N)l_1}}+\frac1{1+e^{h(g_N)l_2}}\leq\frac12,\]}

\begin{lemma} \label{ping pong}
Let $(M,g_0)$ be a closed hyperbolic manifold and let $\Gamma$ be its fundamental group. For any two noncommuting elements $q_1,q_2\in \Gamma$, there exists an integer $A>0$ such that $q_1^A$ and $q_2^A$ generate a free group of rank $2$ inside $\Gamma$.
\end{lemma}

\begin{proof}[Proof of Theorem \ref{main1}]
    Let $\gamma$ be a simple closed geodesic in $(M,g_0)$, for instance a $g_0$-length minimizing representative of a class in $\pi_1(M)$. Let $g_i$ be a sequence of metrics on $M$ given by Theorem \ref{thm:creating_ds} with parameters $a\to -\infty$, $\epsilon=1$, and $r_0\ll1$ fixed. In particular, by Theorem \ref{thm:creating_ds} (ii),
    $$\lim_{i\to \infty}\length_{g_i}(\gamma)=0.$$
    Fixing a basepoint $x\in \gamma\subset  M$, we can identify $\gamma$ with an element $q_1$ of the fundamental group $\Gamma$ of $M$. By Lemma \ref{ping pong} (and since $\Gamma$ is not abelian), there is another element $q_2\in \Gamma$ and an integer $A$ such that $q_1^A$, $q_2^A$ generate a free group of rank $2$ inside $\Gamma$. Going $A$ times around the curve $\gamma$, we easily find a loop $c_{1}$ %\comment{($c_{i,1}$ doesn't need to depend on $i$?)} 
    based at $x$ representing $q_1^A$, such that
    \begin{equation}\label{eq:lengthc1}
        \lim_{i\to \infty}\length_{g_i}(c_{1})=0.
    \end{equation}
    On the other hand, we can find a smoothly embedded loop $c_2$ representing $q_2^A$, such that $\{x\}=c_2\cap\gamma$, $c_2\perp\gamma$ at $x$, and $c_2\cap N_{g_0}(\gamma,r_0)$ is a connected $g_0$-geodesic segment of $g_0$-length $2r_0$ (namely, the only connected component of $c_2\cap N_{g_0}(\gamma,r_0)$ passes through $x$). For $i$ sufficiently large, by Theorem \ref{thm:creating_ds} (i) and (iv), we have
    \begin{equation}\label{eq:lengthc2}
    \length_{g_i}(c_2)\leq 3\length_{g_0}(c_2) <\infty.
    \end{equation}

    We conclude by combining \eqref{eq:lengthc1} and \eqref{eq:lengthc2} with Theorem \ref{collar}: since the $g_i$-length of $c_{1}$ tends to $0$, and the $g_i$-length of $c_{2}$ stays uniformly bounded, it follows that $h(g_i)\to\infty$. According to Theorem \ref{thm:creating_ds}, we have $R_{g_i}\geq-7$, and thus a suitable scaling of the sequence $g_i$ satisfy the conditions of Theorem \ref{main1}.
\end{proof}

\subsection{Examples arbitrarily close to being hyperbolic}
\begin{theorem}[identical to Theorem \ref{main}(B)]\label{main2}
Let $(M,g_0)$ be a closed hyperbolic 3-manifold. Then $M$ admits a Riemannian metric $g$ arbitrarily close to $g_0$ in the $C^0$ topology, such that $R_g\geq -6$ and {$h(g)>2$}.
%\st{the volume entropy $h(g)$ is strictly larger than $2$.}
\end{theorem}

This time the proof makes use of the following volume entropy comparison theorem proved by the second named author \cite[Theorem 3.5]{S}:

\begin{theorem} \label{cle}
Let $(M,g_0)$ be a closed oriented hyperbolic manifold of dimension $n\geq 2$. Suppose that the following hold:
\begin{enumerate}
\item there are Riemannian metrics $g_i$ ($i\geq 1$) on $M$, and a (not necessarily Riemannian) metric $d$ on $M$, so that the length metric $d_{g_i}$ of $g_i$ converges in $C^0$ to $d$ as $i\to\infty$,
\item the identity map $\id: (M,d_{g_0}) \to (M,d)$ is $1$-Lipschitz and bi-Lipschitz.        
\end{enumerate}
\noindent If $\id: (M,d_{g_0}) \to (M,d)$ is not an isometry, then
\begin{equation}
    \liminf_{i\to \infty} h(g_i) >h(g_0)=n-1.
\end{equation}
\end{theorem}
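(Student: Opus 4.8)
The plan is to argue by contradiction: assume $\liminf_i h(g_i)\le n-1$ and deduce that $\id\colon(M,d_{g_0})\to(M,d)$ is an isometry. One first checks that in fact $\liminf_i h(g_i)\ge n-1$, so that $\liminf_i h(g_i)=n-1$ and, after passing to a subsequence, $h(g_i)\to n-1$. For this lower bound I would use Manning's theorem that $h(g)=\lim_{R\to\infty}\frac1R\log\#\{\gamma\in\Gamma:\dist_{\tilde g}(o,\gamma o)\le R\}$, where $\Gamma=\pi_1(M)$ acts on a universal cover with base point $o$. The hypotheses give $\tfrac1L d_{g_0}\le d\le d_{g_0}$, hence $d_{g_i}\le d_{g_0}+c_i$ with $c_i\to0$. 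Given $\gamma$, take a $\tilde d$-geodesic from $o$ to $\gamma o$, project it to $M$, cut it into subarcs of length $\le\ell_0$ (there are $O(\dist_{\tilde g_0}(o,\gamma o))$ of them), and replace each subarc by a $g_i$-minimizing segment between its endpoints; the two-sided bound on $d$ keeps every such segment inside a ball of radius $<\tfrac12\,\mathrm{inj}(M,g_0)$, so the homotopy class is unchanged, and the resulting loop in the class of $\gamma$ has $g_i$-length at most $(1+c_i/\ell_0)\dist_{\tilde g_0}(o,\gamma o)+C$. Since $(\tilde M,\tilde g_0)$ is hyperbolic $n$-space, its orbit counting function grows like $e^{(n-1)R}$, so Manning's formula yields $h(g_i)\ge(n-1)/(1+c_i/\ell_0)\to n-1$.

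The core of the proof is a sharp entropy comparison for metrics almost metrically dominated by the hyperbolic metric --- the main ingredient of \cite{S}: there is a modulus $\omega$ with $\omega(0^+)=0$ such that any Riemannian metric $g$ on $M$ with $d_g\le d_{g_0}+\tau$ and $h(g)\le n-1+\delta$ satisfies $d_{g_0}-\omega(\tau+\delta)\le d_g\le d_{g_0}+\tau$. I would prove this along the lines of Besson--Courtois--Gallot \cite{BCG95}: from the Patterson--Sullivan density of $(\tilde M,\tilde g)$ one builds the natural (barycenter) map $F\colon(M,g)\to(M,g_0)$ homotopic to $\id$, with the pointwise Jacobian bound $|\mathrm{Jac}\,F|\le(h(g)/(n-1))^n$ together with BCG's refined inequality on the singular values of $dF$. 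When $h(g)$ is near $n-1$ this forces $dF$ to be near an isometry in an averaged sense; the device that makes this uniform is the equidistribution of large geodesic spheres of $(\tilde M,\tilde g)$ --- their normalized visual measures at $o$ converge, effectively, to the hyperbolic conformal density, with rate governed by the mixing of the geodesic flow that the hyperbolic domination supplies. This pins the barycenter construction down and shows $F$ is $(1+\omega)$-Lipschitz and $\omega$-close to $\id$; combining $\dist_{g_0}(F(x),F(y))\le(1+\omega)d_g(x,y)$, $\dist_{g_0}(F(x),x)\le\omega$, and $d_g\le d_{g_0}+\tau$ then gives the two-sided estimate.

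Applying the comparison to $g=g_i$, with $\tau=c_i\to0$ and $\delta=\max\{h(g_i)-(n-1),0\}\to0$ along the chosen subsequence, yields $d_{g_i}\to d_{g_0}$ uniformly. But $d_{g_i}\to d$ by hypothesis, so $d=d_{g_0}$; that is, $\id\colon(M,d_{g_0})\to(M,d)$ is the identity, hence an isometry --- contradicting the assumption. Therefore $\liminf_i h(g_i)>n-1=h(g_0)$.

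I expect the main obstacle to be the effective form of the BCG rigidity hidden in the comparison above: converting ``$h$ close to $n-1$'' into uniform closeness of the natural map to an isometry requires quantitative equidistribution of geodesic spheres (equivalently, effective mixing of the geodesic flow, or uniform estimates on the Patterson--Sullivan densities), and this must be carried out uniformly over the sequence even though only the distance functions $d_{g_i}$ --- not the metric tensors --- converge, so that the injectivity radii of the $g_i$ may degenerate, exactly as they do for the drawstring metrics of Section \ref{sec:ds}.
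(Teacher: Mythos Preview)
The paper does not prove Theorem~\ref{cle}; it is quoted from \cite[Theorem~3.5]{S} and used as a black box in the proof of Theorem~\ref{main}. There is therefore no proof in this paper to compare your proposal against.

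That said, your overall contradiction strategy --- establish $\liminf_i h(g_i)\ge n-1$ from the metric domination, pass to a subsequence with $h(g_i)\to n-1$, and then invoke a quantitative rigidity to force $d_{g_i}\to d_{g_0}$ --- is the natural shape for such an argument, and the paper's introduction confirms that the result in \cite{S} indeed ``makes use of equidistribution properties of geodesic spheres,'' which you correctly flag as the key analytic input. Your step~1 (the lower bound) is essentially correct, though the presentation is tangled: it is cleaner to cut a $\tilde g_0$-geodesic from $o$ to $\gamma o$ into $N\approx d_{\tilde g_0}(o,\gamma o)/\ell_0$ subarcs of $g_0$-length at most $\ell_0<\mathrm{inj}(M,g_0)$ and use $d_{g_i}\le d_{g_0}+c_i$ on each subarc directly in the cover, obtaining $d_{\tilde g_i}(o,\gamma o)\le(1+c_i/\ell_0)\,d_{\tilde g_0}(o,\gamma o)+c_i$ without any homotopy bookkeeping.

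Your step~2, however, is a list of expected ingredients rather than a proof, as you yourself concede in the final paragraph. The quantitative BCG rigidity you posit is genuinely delicate, and the precise statement you write down --- a uniform two-sided bound $d_{g_0}-\omega(\tau+\delta)\le d_g\le d_{g_0}+\tau$ valid for \emph{any} single metric $g$ with $d_g\le d_{g_0}+\tau$ and $h(g)\le n-1+\delta$ --- is likely too strong: the main stability theorem of \cite{S} (as summarized in this paper's introduction) only concludes closeness to $g_0$ after excising a bad set $Z$, which suggests that the pointwise lower bound you claim can fail. The actual argument for Theorem~\ref{cle} presumably exploits the full sequential hypothesis and the existence of the $C^0$ limit $d$, rather than working metric-by-metric. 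As written, the proposal identifies the right circle of ideas but does not constitute a proof.
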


\begin{proof}[Proof of Theorem \ref{main2}]
    Fix $\delta>0$ and let $(M,g_0)$ be as in the theorem. Fix a simple closed geodesic $\gamma$ in $(M,g_0)$, and let $g_i$ be the sequence of metrics on $M$ given by Theorem \ref{thm:creating_ds} with parameters $a=-\delta$, $\epsilon=r_0=1/i$, so that for all $i>0$ large enough,
    %\comment{parameter choices: $a=-\delta$, $\epsilon=r_0=1/i$}, such that for all $i>0$,
    \begin{equation} \label{eq:2sided}
        R_{g_i}\geq-6-i^{-1},\qquad e^{-2\delta}g_0\leq g_i\leq e^{2\delta}g_0.
    \end{equation}
    Let $d_{g_i}$ denote the length metric of $g_i$. We claim the following:
    \begin{enumerate}[label={(\Roman*)}, nosep]
        \item some subsequence of $d_{g_i}$ converges in $C^0$ to a metric $d$ on $M$,
        \item the identity map $\id:(M,d_{g_0})\to(M,d)$ is not an isometry,
        \item the identity map $\id$ is bi-Lipschitz and 1-Lipschitz.
    \end{enumerate}
    
    %\comment{(insert Observation from the old paper)} the metrics $d_{g_i}$ $C^0$-converge to a metric $d$ on $M$, and the identity map $\id : (M,d_{g_0}) \to (M,d)$ is $1$-Lipschitz \comment{(this is non trivial)}, bi-Lispchitz, and not an isometry\comment{[because of item (ii) of ds theorem]}.
    
    %\comment{To proof 1-Lip, it seems we need to import, with modification, claims 1 and 2 from the previous version.}
    %\comment{Is this a correct proof of 1-Lipschitz?: let $X_i=M\setminus N(\gamma,1/i)$ with the induced Riemannian distance. Note that $d_{g_i}\leq d_{X_i}$. Also we have $X_i\xrightarrow{GH}(M,d_{g_0})$.}

    \noindent With these facts, we can apply Theorem \ref{cle} to $g_i$ and deduce  
    \begin{equation}
        \liminf_{i\to \infty} h(g_i) >h(g_0)=2.
    \end{equation}
    Taking  $\delta$ sufficiently small, this readily implies (after a slight rescaling) that there are Riemannian metrics on $M$ arbitrarily $C^0$-close to $g_0$, with scalar curvature at least $-6$, but with volume entropy strictly larger than $2$. 
    
    It remains to prove the three claims above.

    \vspace{1em}
    
    \noindent\textit{Proof of (I).} 
    By \eqref{eq:2sided}, we have
    $$e^{-2\delta} d_{g_0} \leq d_{g_i} \leq e^{2\delta} d_{g_0}.$$
    Thus $\{d_{g_i}\}$ is an equicontinuous family of continuous functions on $M\times M$ with respect to $d_{g_0}$.
    So (I) follows readily from the Arzela-Ascoli theorem. Moreover, the limit metric $d$ satisfies
       \begin{equation}\label{eq:bill}
      e^{-2\delta} d_{g_0} \leq d \leq e^{2\delta} d_{g_0}.
    \end{equation}

        \vspace{1em}
        
    \noindent\textit{Proof of (II).} This follows from $g_i|_\gamma=e^{-2\delta}g|_\gamma$ and the $C^0$-convergence.

    \vspace{1em}
    
    \noindent\textit{Proof of (III).} The bi-Lipschitz property of the identity map follows from \eqref{eq:bill}. Proving $1$-Lipschitzness requires a bit more checking. Let $s_I$ be the normal injectivity radius of $\gamma$ for the metric $g_0$. We can assume that $i^{-1}<{s_I/2}$. For convenience, we define a $g_0$-orthogonal projection map $\pi$: for $p\in\overline{N(\gamma,i^{-1})}$, we let $\pi(p)\in\gamma$ be the point with the shortest $g_0$-distance to $p$. We note the following facts about $g_0$ and $\pi$:

 \vspace{1em}

    \noindent\textbf{Fact 1.} (a) If $\sigma:[0,1]\to M\setminus N(\gamma,i^{-1})$ is a $g_0$-geodesic segment such that both $\sigma(0)$ and $\sigma(1)$ lie on $\p N(\gamma,i^{-1})$, then $\length_{g_0}(\sigma)\geq s_I/2$.
    %where $s_I$ is the normal injective radius of $\gamma$.
    %, and $s_0=\arcsinh(r_0)<\frac12s_I$ by our initial assumption.

    (b) If $p\in\p N(\gamma,i^{-1})$ then $d_{g_i}(p,\pi(p))<3i^{-1}$.

    (c) The map $\pi:N(\gamma,i^{-1})\to\gamma$ is 1-Lipschitz with respect to both $g_0$ and $g_i$.

    %(4) We have $g_i|_\sigma=\frac14g_0|_\sigma$, as 1-dimensional Riemannian metrics.

    %(2) If $p,q\in\sigma$, then the $g_0$-distance from $p$ to $q$ is achieved by a segment of $\sigma$.
    
    \vspace{3pt}
    \noindent\textit{Proof of Fact 1.} (a) Since $\p N(\gamma,s)$ is strictly convex {for all $s\leq s_I$}, the function $x\mapsto d_{g_0}(x,\gamma)$ is strictly convex in $N(\gamma,s_I)$. Thus $\sigma$ must leave $N(\gamma,s_I)$, otherwise $d(-,\gamma)$ has a local maximum on $\sigma$, which is not possible. The statement immediately follows for large $i$. Item (b) follows from Theorem \ref{thm:creating_ds} (iv), and item (c) {follows directly from the metric expressions \eqref{eq:hyp_metric} \eqref{eq:ds_metric} and the fact that the warping factor defined by \eqref{eq-cutoff:def_u} satisfies  $u(0)\leq u(r)$ for $r\geq0$.} 
    %[[ is a well-known property of hyperbolic metrics and the expression in coordinates \eqref{eq:ds_metric} of $g_i$. ]]
    %follows from the local coordinate expressions \eqref{eq-ds:hyp} \eqref{eq-ds:bargi} and $\pi((s,\th,t))=(0,\th,t)$. For the metric $g_i$ we need $w_i(r)\geq w_i(0)$ which comes from our construction \eqref{eq-ds:wi}. 
    %(4) follows from the fact that $e^{2w_i(0)}=\frac14$. 
    \\

    %(2) Let $\sigma_1,\sigma_2$ be the two segments of $\sigma$ separated by $p,q$. Suppose the statement does not hold, so there is a third geodesic $\gamma$ from $p$ to $q$, with $|\gamma|<\min\{|\sigma_1|,|\sigma_2|\}$. Consider the loops $\alpha_1=\gamma+\sigma_1$ and $\alpha_2=\gamma+\sigma_2$. We have $|\alpha_1|<|\sigma|$, $|\alpha_2|<|\sigma|$, and $[\sigma]+[\alpha_1]+[\alpha_2]=0\in\pi_1(M)$ (with appropriate orientation). This contradicts the fact that $\sigma$ realizes the shortest length in $\pi_1(M)$.

The desired $1$-Lipschitzness will be an immediate corollary of the following:

\vspace{6pt}
    \noindent\textbf{Fact 2.} There is a constant $C$ such that $d_{g_i}\leq d_{g_0}+Ci^{-1}$.
    
    \vspace{3pt}
    \noindent\textit{Proof of Fact 2.} Let $x,y\in M$ and $\sigma$ be a shortest $g_0$-geodesic from $x$ to $y$. There exists times $0=b_0\leq a_1<b_1<a_2<\cdots<a_n<b_n\leq a_{n+1}=l$, such that $\sigma|_{(a_k,b_k)}\subset N({\gamma},i^{-1})$ and $\sigma|_{[b_k,a_{k+1}]}\subset M\setminus N(\gamma,i^{-1})$. Recall that $g_i=g_0$ on $M\setminus N(\gamma,i^{-1})$. In the case $\sigma\subset M\setminus N(\gamma,i^{-1})$ we have by Theorem \ref{thm:creating_ds} (i): $$d_{g_i}(x,y)\leq\length_{g_i}(\sigma)=\length_{g_0}(\sigma)=d_{g_0}(x,y).$$ 
    
    Now assume that $\sigma$ enters $N(\gamma,i^{-1})$ at least once. For each interval $(a_k,b_k)$, we let $p_k=\pi(\sigma(a_k))$ and $q_k=\pi(\sigma(b_k))$. Using Theorem \ref{thm:creating_ds}(ii) and Fact 1(c), we can estimate
    \begin{align}
        \length_{g_0}(\sigma([a_k,b_k])) &\geq \length_{g_0}\pi(\sigma([a_k,b_k])))  \\
        &\geq d_{g_0|_{{\gamma}}}(p_k,q_k)
        \geq d_{g_i|_{{\gamma}}}(p_k,q_k)
        \geq d_{g_i}(p_k,q_k) \\
        &\geq d_{g_i}(\sigma(a_k),\sigma(b_k))-d_{g_i}(\sigma(a_k),p_k)-d_{g_i}(q_k,\sigma(b_k)) \\
        &\geq d_{g_i}(\sigma(a_k),\sigma(b_k))-6i^{-1},
    \end{align}
    where we used Fact 1(b) in the last inequality. We also have by Theorem \ref{thm:creating_ds} (i):
    \begin{align}
        \length_{g_0}(\sigma([b_k,a_{k+1}]))=\length_{g_i}(\sigma([b_k,a_{k+1}]))\geq d_{g_i}(\sigma(b_k),\sigma(a_{k+1})).
    \end{align}
    Summing the above inequalities, we have 
    \begin{equation}
        d_{g_0}(x,y)\geq d_{g_i}(x,y)-6ni^{-1}.
    \end{equation}
    Since $\length_{g_0}(\sigma)\leq\diam(M,g_0)$ and $\length_{g_0}(\sigma([b_k,a_{k+1}]))\geq s_I/2$ for each $1\leq k<n$ by Fact 1(a), we obtain $n\leq 2\diam(M)/s_I$. Hence 
    \begin{align}
        d_{g_i}(x,y)\leq d_{g_i}(x,y)+\frac{12\operatorname{diam}(M)}{s_I}\cdot\frac{1}{i}
    \end{align}
    and Fact 2 follows.
\end{proof}

\bibliographystyle{alpha}
\bibliography{JEMS_final/pulledstringentropy}

\newcommand{\etalchar}[1]{$^{#1}$}
\begin{thebibliography}{BCGS17}

\bibitem[Ago]{agol}
Ian Agol.
\newblock Research blog 10/28/03.
\newblock \url{https://drive.google.com/file/d/1_Jfnxst9nS2SRamH3E2nILpIuSyWmfHx/view}.
\newblock Accessed: 2023-11-15.

\bibitem[And06]{Anderson06}
Michael~T. Anderson.
\newblock Canonical metrics on 3-manifolds and 4-manifolds.
\newblock {\em Asian J. Math.}, 10(1):127--163, 2006.

\bibitem[AST07]{AST07}
Ian Agol, Peter Storm, and William Thurston.
\newblock Lower bounds on volumes of hyperbolic haken 3-manifolds.
\newblock {\em Journal of the American Mathematical Society}, 20(4):1053--1077, 2007.

\bibitem[BCG91]{BCG91}
G\'{e}rard Besson, Gilles Courtois, and Sylvestre Gallot.
\newblock Volume et entropie minimale des espaces localement sym\'{e}triques.
\newblock {\em Invent. Math.}, 103(2):417--445, 1991.

\bibitem[BCG95]{BCG95}
G\'{e}rard Besson, Gilles Courtois, and Sylvestre Gallot.
\newblock Entropies et rigidit\'{e}s des espaces localement sym\'{e}triques de courbure strictement n\'{e}gative.
\newblock {\em Geom. Funct. Anal.}, 5(5):731--799, 1995.

\bibitem[BCGS17]{BCGS17}
G{\'e}rard Besson, Gilles Courtois, Sylvestre Gallot, and Andrea Sambusetti.
\newblock Curvature-free margulis lemma for gromov-hyperbolic spaces.
\newblock {\em arXiv preprint arXiv:1712.08386}, 2017.

\bibitem[BM23]{BM23}
Florent Balacheff and Louis Merlin.
\newblock A curvature-free $\log(2k-1)$ theorem.
\newblock {\em Proceedings of the American Mathematical Society}, 151(06):2429--2434, 2023.

\bibitem[Bra97]{Braythesis}
Hubert~Lewis Bray.
\newblock {\em The {P}enrose inequality in general relativity and volume comparison theorems involving scalar curvature}.
\newblock ProQuest LLC, Ann Arbor, MI, 1997.
\newblock Thesis (Ph.D.)--Stanford University.

\bibitem[Cer14]{Cerocchi14}
Filippo Cerocchi.
\newblock Margulis lemma, entropy and free products.
\newblock {\em Ann. Inst. Fourier (Grenoble)}, 64(3):1011--1030, 2014.

\bibitem[CMN22]{CMN22}
Danny Calegari, Fernando~C Marques, and Andr{\'e} Neves.
\newblock Counting minimal surfaces in negatively curved 3-manifolds.
\newblock {\em Duke Math. J}, 171:1615--1648, 2022.

\bibitem[Dav03]{Davaux03}
H{\'e}l{\`e}ne Davaux.
\newblock An optimal inequality between scalar curvature and spectrum of the laplacian.
\newblock {\em Mathematische Annalen}, 327:271--292, 2003.

\bibitem[DS23]{DS23}
Conghan Dong and Antoine Song.
\newblock Stability of {E}uclidean 3-space for the {P}ositive {M}ass {T}heorem, 2023.
\newblock arxiv preprint.

\bibitem[KL08]{KL08}
Bruce Kleiner and John Lott.
\newblock Notes on {P}erelman's papers.
\newblock {\em Geom. Topol.}, 12(5):2587--2855, 2008.

\bibitem[KX23]{KX}
Demetre Kazaras and Kai Xu.
\newblock Drawstrings and flexibility in the {G}eroch conjecture, 2023.

\bibitem[Lim08]{Lim08}
Seonhee Lim.
\newblock Minimal volume entropy for graphs.
\newblock {\em Transactions of the American Mathematical Society}, 360(10):5089--5100, 2008.

\bibitem[LN21]{LN21}
Ben Lowe and Andre Neves.
\newblock Minimal surface entropy and average area ratio.
\newblock {\em arXiv preprint arXiv:2110.09451}, 2021.

\bibitem[LNN23]{LNN23}
Man-Chun Lee, Aaron Naber, and Robin Neumayer.
\newblock {$d_p$-convergence} and {$\epsilon$-regularity} theorems for entropy and scalar curvature lower bounds.
\newblock {\em Geometry \& Topology}, 27(1):227--350, 2023.

\bibitem[MW22]{MW22}
Ovidiu Munteanu and Jiaping Wang.
\newblock Geometry of three-dimensional manifolds with scalar curvature lower bound.
\newblock {\em arXiv preprint arXiv:2201.05595}, 2022.

\bibitem[MW23]{MW23}
Ovidiu Munteanu and Jiaping Wang.
\newblock Comparison theorems for 3d manifolds with scalar curvature bound.
\newblock {\em International Mathematics Research Notices}, 2023(3):2215--2242, 2023.

\bibitem[Pet16]{Petersen}
Peter Petersen.
\newblock {\em Riemannian geometry}, volume 171 of {\em Graduate Texts in Mathematics}.
\newblock Springer, Cham, third edition, 2016.

\bibitem[S{\etalchar{+}}21]{Sormani21}
Christina Sormani et~al.
\newblock Conjectures on convergence and scalar curvature.
\newblock {\em arXiv preprint arXiv:2103.10093}, 2021.

\bibitem[Sch06]{Schoen06}
Richard~M Schoen.
\newblock Variational theory for the total scalar curvature functional for riemannian metrics and related topics.
\newblock In {\em Topics in Calculus of Variations: Lectures given at the 2nd 1987 Session of the Centro Internazionale Matematico Estivo (CIME) held at Montecatini Terme, Italy, July 20--28, 1987}, pages 120--154. Springer, 2006.

\bibitem[Son25]{S}
Antoine Song.
\newblock Entropy and stability of hyperbolic manifolds.
\newblock {\em Geom. Funct. Anal.}, 35(3):877--914, 2025.

\bibitem[SW11]{SW11}
Christina Sormani and Stefan Wenger.
\newblock The intrinsic flat distance between {R}iemannian manifolds and other integral current spaces.
\newblock {\em J. Differential Geom.}, 87(1):117--199, 2011.

\bibitem[Yua23]{Yuan23}
Wei Yuan.
\newblock Volume comparison with respect to scalar curvature.
\newblock {\em Analysis \& PDE}, 16(1):1--34, 2023.

\end{thebibliography}

\noindent\textit{Department of Mathematics, Michigan State University,
%619 Red Cedar Road, C212 Wells Hall \\
East Lansing, MI 48824}

\noindent\textit{Email: \href{mailto:kazarasd@msu.edu}{kazarasd@msu.edu}}

\vspace{9pt}

\noindent\textit{Department of Mathematics, Duke University, Durham, NC, 27708,}

\noindent\textit{Email: \href{mailto:kx35@math.duke.edu}{kx35@math.duke.edu}}

\vspace{9pt}

\noindent\textit{Department of Mathematics, California Institute of Technology\\ 177 Linde Hall, \#1200 E. California Blvd., Pasadena, CA 91125}

\noindent\textit{Email: \href{mailto:aysong@caltech.edu}{aysong@caltech.edu}}

\end{document}